\def\newthm#1#2{\newtheorem{#1}[dummy]{#2}%
  \expandafter\def\csname#2\endcsname##1{\hyperref[#1:##1]{#2~\ref*{#1:##1}}}}
\theoremstyle{definition}
\def\namedlabel#1#2{\begingroup
  #2%
  \def\@currentlabel{#2}%
  \phantomsection\label{#1}\endgroup
}
\newcommand{\Section}[1]{\hyperref[sec:#1]{Section~\ref*{sec:#1}}}
\newcommand{\Table}[1]{\hyperref[tab:#1]{Table~\ref*{tab:#1}}}
\newcommand{\eqn}[1]{\hyperref[eqn:#1]{(\ref*{eqn:#1})}}
\newcommand{\Figure}[1]{\hyperref[fig:#1]{Figure~\ref*{fig:#1}}}
\DeclareMathOperator{\Gr}{Gr}
\DeclareMathOperator{\Fl}{Fl}
\DeclareMathOperator{\Span}{Span}
\DeclareMathOperator{\QH}{QH}
\DeclareMathOperator{\QK}{QK}
\DeclareMathOperator{\codim}{codim}
\DeclareMathOperator{\opp}{{opp}}
\DeclareMathOperator{\comin}{{comin}}
\DeclareMathOperator{\ch}{\ch}
\renewcommand{\P}{{\mathbb P}}
\newcommand{\C}{{\mathbb C}}
\newcommand{\Z}{{\mathbb Z}}
\newcommand{\cM}{{\mathcal M}}
\DeclareMathOperator{\ev}{ev}
\newcommand{\wt}{\widetilde}
\newcommand{\wb}{\overline}
\newcommand{\ov}{\overline}
\newcommand{\ignore}[1]{}
\newcommand{\Mb}{\wb{\mathcal M}}
\begin{document}

\title{Curve neighborhoods of Seidel products in quantum cohomology}

\date{\today}

\author{Mihail Țarigradschi}
\address{Department of Mathematics, Rutgers University, 110
    Frelinghuysen Road, Piscataway, NJ 08854, USA}
\email{mt994@math.rutgers.edu}

\thanks{The author was partially supported by NSF grant DMS-2152316.}

\begin{abstract}
    A conjecture of Buch--Chaput--Perrin asserts that the two-pointed curve neighborhood corresponding to a quantum product of Seidel type is an explicitly given Schubert variety. We prove this conjecture for flag varieties in type \(A\).
\end{abstract}

\maketitle

\section{Introduction}

Given a flag variety \(X = G/P_X\), the Seidel representation on the small quantum cohomology ring \(\QH(X)|_{q=1}\) specialized at \(q=1\) can be described combinatorially, see \cites{belkale_transformation_2004,chaput_affine_2009}.
In the recent paper \cite{buch_seidel_2023}, this action is extended to the quantum K-theory ring \(\QK(X)|_{q=1}\) when \(X\) is cominuscule, and it is found that similar identities hold. On \(\QH(X)\) it is realized as the action of a subgroup \(W^{\comin} \subset W\) through the identity
\begin{equation}
    \label{eqn:QHeq}
    [X^w] \star [X^u] = q^d [X^{wu}],
\end{equation}
where \(w\in W^{\comin}\), \(u\in W\), and \(d=d_{\min}(w, u)\in H_2(X,\Z)\) denotes the smallest degree of a rational curve connecting general translates of \(X^w\) and \(X^u\). The subgroup \(W^{\comin}\) consists of the identity and the minimal length representatives of the longest Weyl group element \(w_0\) in \(W^M = W/W_M\) for each cominuscule flag variety \(M = G/P_M\):
\[
    W^{\comin} = \{1\} \cup \{ (w_0)^M : M = G/P_M \text{ where \(M\) is a cominuscule variety} \},
\]
here \((w_0)^M \in W\) denotes the representative of \(w_0\) in \(W^M\).

Equation \eqn{QHeq} also implies the following identity:
\[
    [X^w] \star [X^u] = q^d [\Gamma_d(X_{w_0 w}, X^u)],
\]
where \(\Gamma_d(X_{w_0 w}, X^u)\) is the two-pointed curve neighborhood defined as the union of all stable curves of degree \(d\) that pass through \(X_{w_0 w}\) and \(X^u\).

This led the authors of \cite{buch_seidel_2023} to conjecture that \(\Gamma_d(X_{w_0 w}, X^u)\) is a translate of \(X^{wu}\):
\begin{conj}[\cite{buch_seidel_2023}, Conjecture 3.11]
    \label{conj:gen}
    Let \(X = G/P_X\) be any flag variety. For \(u \in W, w \in W^{\comin}\), and \( d = d_{\min}(w,u) \in H_2(X,\Z)\), we have
    \[
        \Gamma_d(X_{w_0 w}, X^u) = w^{-1}.X^{w u}.
    \]
\end{conj}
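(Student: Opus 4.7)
The plan is to prove the conjecture by establishing the two inclusions separately. Fix a cominuscule variety \(M = G/P_M\) such that \(w = (w_0)^M \in W^{\comin}\), and let \(\sigma \colon \C^\times \to G\) be the one-parameter subgroup associated to the cominuscule coweight of \(M\). The key geometric fact is that closures of \(\sigma\)-orbits on \(X = G/P_X\) are rational curves of a uniform degree in \(H_2(X, \Z)\), and their two \(\sigma\)-fixed points are explicitly describable in terms of \(w\) acting through the projection \(\pi \colon G/P_X \to G/P_M\). This Seidel construction is the only nontrivial geometric input I plan to use.

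For the inclusion \(w^{-1}.X^{wu} \subseteq \Gamma_d(X_{w_0 w}, X^u)\), I would show that a general point \(x \in w^{-1}.X^{wu}\) lies on a \(\sigma\)-orbit closure \(\bP^1_x \subset X\) of degree exactly \(d = d_{\min}(w,u)\) whose attracting and repelling fixed components meet \(X_{w_0 w}\) and \(X^u\), respectively. Membership of \(w\) in \(W^{\comin}\) is precisely what guarantees a compatible matching of fixed points, and \eqn{QHeq} forces the degree of such an orbit closure to agree with \(d_{\min}(w,u)\). This produces an explicit stable curve of degree \(d\) through \(x\) meeting both Schubert varieties, so \(x \in \Gamma_d(X_{w_0 w}, X^u)\).

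For the reverse inclusion, I would combine a dimension count with a \(T\)-fixed point analysis. Using \eqn{QHeq} and the interpretation of curve neighborhoods via Gromov--Witten numbers, one has \(\dim \Gamma_d(X_{w_0 w}, X^u) = \dim X - \ell(wu) = \dim(w^{-1}.X^{wu})\). Since \(\Gamma_d\) is \(T\)-stable and, in minimal degree, expected to be irreducible (after Buch--Chaput--Perrin), it suffices to check that every \(T\)-fixed point \(vP_X/P_X\) of \(\Gamma_d\) lies in \(w^{-1}.X^{wu}\). This becomes a combinatorial statement about chains of \(T\)-stable rational curves in \(X\) of total degree \(d\), running from a fixed point of \(X_{w_0 w}\) to a fixed point of \(X^u\) and passing through \(vP_X/P_X\), which I would attack on the moment graph of \(X\) using Bruhat-order arguments together with the Seidel identity on \(\QH(X)\).

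The main obstacle is the combinatorial step in the reverse inclusion for \(X\) not itself cominuscule: the Seidel element \(w\) acts on \(X\) only indirectly, through the cominuscule projection \(\pi\), so one must translate chain conditions on the moment graph of \(X\) into chain conditions on \(G/P_M\) via a Peterson--Woodward-type comparison, and then invert that translation to pin down \(v\). In type \(A\) one can read this off from direct permutation combinatorics on Grassmannians; outside type \(A\), I expect the principal difficulty to be a uniform compatibility statement between curve neighborhoods in \(X\) and in its cominuscule quotient, controlling how degrees split under \(\pi\). Establishing this compatibility, rather than either inclusion in isolation, is where the bulk of the work for an arbitrary flag variety will lie.
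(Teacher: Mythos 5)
Your proposal is a plan rather than a proof, and the decisive step is exactly the one you leave open. After the dimension/irreducibility reduction (which you and the paper both make, via the Seidel identity \eqn{QHeq} and the irreducibility of \(M_d(X_{w_0w},X^u)\)), everything comes down to the single inclusion \(\Gamma_d(X_{w_0 w}, X^u) \subseteq w^{-1}.X^{wu}\); your forward inclusion is then automatic and the effort spent on \(\sigma\)-orbit closures is superfluous. For the remaining inclusion, your reduction to \(T\)-fixed points has a genuine gap: \(\Gamma_d(X_{w_0 w}, X^u)\) is only \(T\)-stable (it is cut out by a \(B\)-stable and a \(B^-\)-stable condition simultaneously), and an irreducible \(T\)-stable subvariety is \emph{not} in general contained in a translated Schubert variety just because its \(T\)-fixed points are --- that implication is valid for \(B\)- or \(B^-\)-stable subvarieties (unions of Schubert cells) or for \(T\)-stable curves, but not for arbitrary \(T\)-stable subvarieties. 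So even granting the moment-graph combinatorics, the step ``fixed points in \(w^{-1}.X^{wu}\) implies \(\Gamma_d \subseteq w^{-1}.X^{wu}\)'' would need a separate justification you do not supply. Finally, the ``Peterson--Woodward-type comparison'' you flag as the main obstacle is precisely the content you would need to prove; acknowledging it does not discharge it.

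For contrast, the paper does not attempt the general case at all: it proves the conjecture only in type \(A\), by (i) reducing to maximal parabolics via \Lemma{induct} --- the intersection \(X^{w^Y} \cap X^{w^Z} = X^w\) coming from the join construction in \(W^X\) \cite{bjorner_combinatorics_2005}*{Theorem 2.6.1}, combined with the compatibility \(p_*(d_{\min}(w,u)) = d_{\min}(w,u)\) under projections --- and then (ii) for \(\Gr(k,n)\), computing \(\Gamma_d\) explicitly through the quantum-equals-classical theorem: the incidence conditions on \(V_{k\pm d}\) are massaged into conditions \(\dim V_k \cap G_i \geq i\) for a flag \(F_\bullet^{\opp} = w^{-1}.E_\bullet^{\opp}\), exhibiting \(\Gamma_d \subseteq w^{-1}.X^v\) directly, with \(v = (wu)^X\) pinned down by the length count. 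If you want a workable strategy, the reduction in part (i) is general and you could adopt it verbatim; the real content you must replace is part (ii), and neither the \(\sigma\)-orbit construction nor the fixed-point argument as stated gets you there.
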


It is known that the conjecture holds in the following cases:
\begin{itemize}
    \item \(d=0\) (see \cite{buch_seidel_2023}*{Proposition 2.2});
    \item \(X\) is cominuscule and \(w = w_0^X\) (see \cite{buch_seidel_2023}*{Lemma 3.1});
    \item \(X = \Gr(k, n)\) and \(w=w_0^{\P^{n-1}}\) (see \cite{li_seidel_2022}*{Proposition 4.5}).
\end{itemize}

In this paper we consider the following specialization:
\begin{conj}
    \label{conj:spec}
    Let \(X = G/P_X\) be a flag variety where \(P_X\) is a \textbf{maximal} parabolic subgroup of \(G\). For \(u \in W, w \in W^{\comin}\), and \( d = d_{\min}(w,u) \in H_2(X,\Z)\), we have
    \[
        \Gamma_d(X_{w_0 w}, X^u) = w^{-1}.X^{w u}.
    \]
\end{conj}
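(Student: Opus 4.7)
For the inclusion $w^{-1}.X^{wu} \subseteq \Gamma_d(X_{w_0 w}, X^u)$: From \eqn{QHeq} and Poincaré duality on $X$ we extract the three-point Gromov-Witten invariant $\langle [X^w],[X^u],[X_{wu}]\rangle_d = 1$, counting (generic) degree-$d$ stable maps with three marked points sent to $X^w$, $X^u$, $X_{wu}$. By specializing the first Schubert variety to the specific translate $X_{w_0 w} = w_0.X^w$ (using $w_0 B = B^- w_0$) and studying the resulting family of stable maps as the third marked point moves, one can show that every point of $w^{-1}.X^{wu}$ lies on such a curve. This kind of generic-to-specific specialization has already been carried out in the cominuscule case $w = w_0^X$ and in the Grassmannian case with $w = w_0^{\P^{n-1}}$ cited in the introduction, and we expect the same style of argument (with transversality deferred to the smooth locus of the Kontsevich moduli) to work uniformly for all $w \in W^{\comin}$.

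For the reverse inclusion $\Gamma_d(X_{w_0 w}, X^u) \subseteq w^{-1}.X^{wu}$ in type A: since $P_X$ is maximal, $H_2(X,\Z) \cong \Z$ and $d$ is a single positive integer. Writing $X = \Gr(k,n)$, I would invoke the kernel-span model: every stable map $f : C \to X$ of degree $d$ has a kernel $K(f) = \bigcap_{p \in C} f(p)$ and a span $S(f) = \sum_{p \in C} f(p)$ satisfying $K \subseteq S$ and $\dim S - \dim K \le 2d$, and $f$ factors through the sub-Grassmannian $\Gr(k-\dim K, S/K)$. The incidence of $f$ with $X_{w_0 w}$ and $X^u$ translates into intersection conditions on the pair $(K,S)$ against the standard and opposite flags, and a combinatorial analysis, using the explicit shapes of the Seidel elements $w_0^{\Gr(m,n)} \in S_n$, confines the third evaluation point $\ev_3(f)$ to lie in $w^{-1}.X^{wu}$.

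The main obstacle will be carrying out this kernel-span analysis uniformly across the $n-1$ Seidel elements and all $u \in W$, and verifying that the intersection conditions produce exactly $w^{-1}.X^{wu}$ rather than a strictly larger locus. To close the argument I would combine the set-theoretic containments with two further checks: that $\Gamma_d(X_{w_0 w}, X^u)$ is irreducible, which is inherited from the relevant component of the Kontsevich moduli $\overline{M}_{0,3}(X,d)$, and that its dimension matches $\dim w^{-1}.X^{wu}$, computed from the degree shift in \eqn{QHeq}. Together these upgrade the two inclusions between irreducible varieties of the same dimension to the desired equality. One likely useful bootstrap, if a single Seidel element becomes intractable, is to reduce other $w_0^{\Gr(m,n)}$ to the known case $w = w_0^{\Gr(k,n)} = w_0^X$ of \cite{buch_seidel_2023}*{Lemma 3.1} by factoring a curve through a sub-Grassmannian embedding and transferring the resulting curve neighborhood.
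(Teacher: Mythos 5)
Your overall framework is the right one --- the kernel--span construction is exactly the ``quantum equals classical'' statement the paper uses (via the incidence variety \(Z_d\) of pairs \(K \subseteq S\) with \(\dim K = k-d\), \(\dim S = k+d\)), and your closing step (irreducibility of \(\Gamma_d(X_{w_0 w}, X^u)\) from the Kontsevich moduli plus the dimension count coming from \eqn{QHeq}) is precisely how the paper upgrades a single inclusion to an equality. But as written there is a genuine gap: the heart of the proof, which you explicitly defer as ``the main obstacle,'' is the combinatorial and linear-algebraic analysis itself. The paper carries it out as follows: (i) an explicit formula \eqn{degeqn} for \(d = d_{\min}(w,u)\) in terms of the partition \(\lambda\) of \(u^X\) and the index \(\beta\) of the Seidel element, which is what drives the case split \(1 \le i \le d\) versus \(d+1 \le i \le k\); (ii) translating the incidence conditions on \((K,S)\) into conditions \(\dim V_k \cap G_i \ge i\), where the \(G_i\) are intersections \(E^{\opp}_{\bullet} \cap E_\beta\) and sums \(E^{\opp}_{\bullet} + E_\beta\); and (iii) the key observation that all the \(G_i\) sit inside the single complete flag \(w^{-1}.E^{\opp}_\bullet\), so the conditions cut out one translated Schubert variety \(w^{-1}.X^v\), followed by the length computation \(\ell(v) = n(k-d) - \beta k + \ell(u^X) = \ell((wu)^X)\). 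Without (i)--(iii) you have a plan, not a proof: nothing in your sketch rules out the curve neighborhood being confined only to a strictly larger \(B^-\)-translate-unstable locus, which is exactly the issue you flag.

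Separately, your first paragraph (the inclusion \(w^{-1}.X^{wu} \subseteq \Gamma_d(X_{w_0 w}, X^u)\)) is both non-rigorous and unnecessary. It is non-rigorous because the Gromov--Witten invariant \(1\) concerns general translates of the three Schubert varieties, and specializing to the particular translates \(X_{w_0 w}\), \(X^u\) while ``moving the third marked point'' is essentially asserting the conclusion; the cited known cases do not supply a uniform mechanism for all \(w \in W^{\comin}\). It is unnecessary because once you know \(\Gamma_d(X_{w_0 w}, X^u)\) is irreducible of the same dimension as \(X^{wu}\), the single inclusion \(\Gamma_d(X_{w_0 w}, X^u) \subseteq w^{-1}.X^{wu}\) already forces equality --- this is the route the paper takes, and adopting it would let you drop the first half of your argument entirely and concentrate on making the kernel--span analysis precise.
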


We prove the following reduction theorem.
\begin{thm}
    \label{thm:implies}
    \Conjecture{gen} follows from \Conjecture{spec} for the same \(G\).
\end{thm}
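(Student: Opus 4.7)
The strategy is to reduce the general case of \Conjecture{gen} to \Conjecture{spec} by projecting $X = G/P_X$ onto its maximal quotients, using the functoriality of curve neighborhoods together with the fact that $W^{\comin}$ depends only on $G$ (and hence the same $w$ is admissible for $X$ and for every maximal quotient). Fix $w \in W^{\comin}$, $u \in W$, and $d = d_{\min}^X(w,u)$. For every maximal parabolic $Q$ of $G$ containing $P_X$, write $Y_Q = G/Q$ and $\pi_Q \colon X \to Y_Q$; let $u^{(Q)}$ denote the minimal representative of $u W_Q$ in $W^Q$.

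First, I would establish the inclusion $w^{-1}.X^{wu} \subseteq \Gamma_d^X(X_{w_0 w}, X^u)$ as a consequence of the quantum cohomology identity \eqn{QHeq}: the nonvanishing of the associated three-point Gromov--Witten invariant produces, through every point of the Seidel translate, a degree-$d$ stable curve meeting general translates of $X_{w_0 w}$ and $X^u$.

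Second, by the functoriality of the moduli of stable maps under $\pi_Q$, one has
\[
    \pi_Q\bigl(\Gamma_d^X(X_{w_0 w}, X^u)\bigr) \;\subseteq\; \Gamma_{(\pi_Q)_* d}^{Y_Q}\bigl(Y_{Q, w_0 w},\, Y_Q^{u^{(Q)}}\bigr).
\]
A separate calculation, based on the explicit description of $d_{\min}$ in \cite{buch_seidel_2023}, shows that $(\pi_Q)_* d = d_{\min}^{Y_Q}(w, u^{(Q)})$. Applying \Conjecture{spec} to the maximal parabolic $Q$, the right-hand side equals $w^{-1}.Y_Q^{w u^{(Q)}} = \pi_Q\bigl(w^{-1}.X^{wu}\bigr)$; combined with the first step this forces the equality $\pi_Q\bigl(\Gamma_d^X(X_{w_0 w}, X^u)\bigr) = \pi_Q\bigl(w^{-1}.X^{wu}\bigr)$ for every maximal $Q \supseteq P_X$.

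Third, I would upgrade these projection-level equalities to the global containment $\Gamma_d^X(X_{w_0 w}, X^u) \subseteq w^{-1}.X^{wu}$, which together with the first step completes the proof. The naive implication ``equal on every projection $\Rightarrow$ equal in $X$'' is false for arbitrary subvarieties, so this step must exploit additional structure: the minimality of $d$ bounds $\dim \Gamma_d^X$ from above, while $\dim w^{-1}.X^{wu}$ is computable from $\ell(wu)$, so a dimension estimate coupled with the projection equalities should force the containment. The main obstacle is exactly this assembly step --- converting a local-to-global statement about subvarieties of a flag variety into an equality --- and it is likely that one needs to identify $w^{-1}.X^{wu}$ with the intersection of its maximal-quotient pullbacks \emph{after} imposing the minimality of $d$ as an extra constraint. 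A secondary but tractable technical point is the verification that $(\pi_Q)_* d_{\min}^X(w,u) = d_{\min}^{Y_Q}(w, u^{(Q)})$, namely the good behavior of minimal degrees under projection to maximal quotients.
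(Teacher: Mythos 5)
Your skeleton --- project to the maximal quotients $G/Q$, check that $d_{\min}$ pushes forward to $d_{\min}$ (that part is fine; the paper handles it by citation), apply \Conjecture{spec} on the quotients, then assemble --- is indeed the paper's strategy, but two load-bearing steps are genuine gaps. First, your step 1 does not work as justified: the inclusion $w^{-1}.X^{wu} \subseteq \Gamma_d(X_{w_0 w}, X^u)$ does not follow from \eqn{QHeq}. Nonvanishing of the Gromov--Witten invariant produces degree-$d$ curves through \emph{general} translates, whereas $\Gamma_d(X_{w_0 w}, X^u)$ is defined by the specific varieties $X_{w_0 w}$ and $X^u$; and the identity $(\ev_3)_*[M_d(X_{w_0 w}, X^u)] = [X^{wu}]$ gives only equality of cohomology classes, hence of dimensions, not a containment of varieties (distinct translates of a Schubert variety have the same class). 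Proving that every point of $w^{-1}.X^{wu}$ lies on such a curve is essentially one half of the conjecture itself. The paper avoids this direction entirely: $M_d(X_{w_0 w}, X^u)$, and hence $\Gamma_d(X_{w_0 w}, X^u)$, is irreducible by \cite{buch_finiteness_2013}*{Corollary 3.3}, and the quantum identity forces $\codim \Gamma_d(X_{w_0 w}, X^u) = \codim X^{wu}$, so the single inclusion $\Gamma_d(X_{w_0 w}, X^u) \subseteq w^{-1}.X^{wu}$ already yields equality. Note that for this only containments $\pi_Q(\Gamma_d(X_{w_0 w}, X^u)) \subseteq w^{-1}.Y_Q^{wu}$ are needed, not your projection-level equalities, so a corrected argument does not use step 1 at all --- but it does need the irreducibility of $\Gamma_d(X_{w_0 w}, X^u)$, which you never invoke.

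Second, the assembly step you flag as ``the main obstacle'' is exactly the missing key lemma, and the fix has nothing to do with the minimality of $d$: it is the purely Bruhat-theoretic statement that $\pi_Q^{-1}(g.Y_Q^{v}) = g.X^{v^{Y_Q}}$ (\cite{buch_seidel_2023}*{Lemma 2.1(e)}) and that the intersection of these pullbacks, over parabolics whose sets of simple roots intersect in $\Delta_{P_X}$, is exactly $g.X^{v}$, because the minimal representatives $v^{Y_Q}$ admit a join equal to $v^{X}$ in the Bruhat order by \cite{bjorner_combinatorics_2005}*{Theorem 2.6.1}. The paper packages this as \Lemma{induct} and, rather than intersecting over all maximal quotients at once, inducts on $|\Delta \setminus \Delta_{P_X}|$ using two parabolics with $P_Y \cap P_Z = P_X$; the substance is the same as what you guessed, but you neither state nor prove it. Until this intersection statement is established and step 1 is replaced by the irreducibility-plus-dimension argument, the proposal is a correct outline of the paper's route rather than a proof.
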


And then prove \Conjecture{spec} for Grassmannians of type A to conclude that \Conjecture{gen} holds for flag varieties of type A. Our proof is inspired from a description of two-pointed curve neighborhoods in Grassmannians given in \cite{li_seidel_2022}*{Proposition 4.5}.

\subsection*{Acknowledgements} We thank Anders Buch for useful discussions and bringing to our attention \Conjecture{gen}.

\section{Notation and Preliminaries}\label{sec:prelim}

Let \(G\) be a complex semisimple linear algebraic group, let \(T \subset B\) be a maximal torus and a Borel subgroup of \(G\). Denote by \(B^{-}\) the Borel subgroup opposite to \(B\), by \(\Phi\) the set of roots of \(G\), by \(\Delta\subset \Phi\) the set of simple roots, and by \(W\) the Weyl group of \(G\).

Let \(P_X \supseteq B\) be a parabolic subgroup of \(G\). In the flag variety \(X = G/P_{X}\) we consider the Schubert varieties \(X_u = \ov{B u P_X/P_X}\) and \(X^u = \ov{B^{-} u P_X/P_X}\) where \(u \in W\). Let \(W_X\) be the Weyl group of \(P_X\), then we denote \(W^X \subset W\) to be the set of minimal length representatives of the cosets in \(W/W_X\). For an element \(u \in W\), denote by \(u^X\in W^X\) the representative of \(u W_X \in W/W_X\). Let \(\Delta_{P_X} \subseteq \Delta\) be the set of simple roots that define \(P_X\), i.e. \(\beta \in \Delta_{P_X} \iff s_\beta \in W_X\).

Recall the general construction of curve neighborhoods (described for example in \cite{buch_finiteness_2013}). Let \(\ov{\cM}_{0,3}(X,d)\) denote the moduli space of \(3\)-pointed, genus \(0\) stable maps to \(X\) of effective degree \(d \in H_2(X,\Z)\). We have the evaluation maps \(ev_i : \ov{\cM}_{0,3}(X,d) \to X\) where \(i\in\{1,2,3\}\) that correspond to taking the image of the \(i\)-th marked point. Given two opposite Schubert varieties \(X_u, X^v\), define the \textit{Gromov--Witten variety} \(M_d(X_u, X^v) = ev_1^{-1}(X_u) \cap ev_2^{-1}(X^v) \subset \ov{\cM}_{0,3}(X,d)\). It describes the coefficients in the quantum product of Schubert classes in \(\QH(X)\):
\begin{equation}
    \label{eqn:GW}
    [X_u] \star [X^v] = \sum_{d \geq 0} (ev_3)_*[M_d(X_u, X^v)] q^d.
\end{equation}
We denote \(\Gamma_d(X_u,X^v) = ev_3(M_d(X_u, X^v))\). The subvariety \(\Gamma_d(X_u,X^v) \subset X\) is called a two-pointed curve neighborhood and consists of the union of all stable curves of degree \(d\) that pass through \(X_u\) and \(X^v\).

\section{Proof of the reduction to maximal parabolics}

In this section, we prove \Theorem{implies}. The main observation is the fact that the intersection of Schubert varieties \(X^{u^Y}\) for appropriate ``smaller" flag varieties \(Y\) is the Schubert variety \(X^u\), this follows from a combinatorial fact from \cite{bjorner_combinatorics_2005}.

\begin{lemma}
    \label{lemma:induct}
    Let \(X = G/P_X\) be a flag variety, \(F\subseteq X\) a subvariety, \(w\in W\), and \(g\in G\). Let \(Y = G/P_Y, Z=G/P_Z\) be flag varieties such that \(P_Y \cap P_Z = P_X\). Denote by \(p : X \to Y\) and \(q : X \to Z\) the projection maps, assume that \(p(F) \subseteq g.Y^w\) and \(q(F) \subseteq g.Z^w\). Then
    \( F \subseteq g.X^w\).
\end{lemma}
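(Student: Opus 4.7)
The plan is to translate the geometric claim into an identity of Bruhat-order support sets on parabolic quotients, so that the hypothesis $P_Y \cap P_Z = P_X$ enters only through the combinatorial identification $\Delta_{P_X} = \Delta_{P_Y} \cap \Delta_{P_Z}$.

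Since $p$ and $q$ are $G$-equivariant, replacing $F$ by $g^{-1}.F$ reduces to the case $g = e$, where the hypotheses read $F \subseteq p^{-1}(Y^w) \cap q^{-1}(Z^w)$. Hence it suffices to prove the set-theoretic identity
\[
    p^{-1}(Y^w) \cap q^{-1}(Z^w) = X^w.
\]

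To do so, I would decompose each side as a disjoint union of Schubert cells $X^u_\circ := B^- u P_X/P_X$, indexed by $u \in W^X$. Since $P_X \subseteq P_Y$, each cell $X^u_\circ$ maps onto $Y^{u^Y}_\circ$ under $p$, and $Y^w = Y^{w^Y}$ is the disjoint union of the $Y^v_\circ$ with $v \in W^Y$ and $v \geq w^Y$. This gives
\[
    p^{-1}(Y^w) = \bigsqcup_{u \in W^X,\, u^Y \geq w^Y} X^u_\circ, \qquad q^{-1}(Z^w) = \bigsqcup_{u \in W^X,\, u^Z \geq w^Z} X^u_\circ,
\]
while $X^w = \bigsqcup_{u \in W^X,\, u \geq w^X} X^u_\circ$. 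Thus the identity above is equivalent to the combinatorial assertion
\[
    u \geq w^X \ \Longleftrightarrow\ u^Y \geq w^Y \text{ and } u^Z \geq w^Z \qquad (u \in W^X).
\]

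The forward implication is immediate, as the quotient $W \twoheadrightarrow W^I$ is order-preserving for any parabolic subset $I$ (and $(w^X)^Y = w^Y$, $(w^X)^Z = w^Z$). The reverse implication is where I expect the main difficulty, and it is the step that genuinely uses $P_Y \cap P_Z = P_X$: one must combine reduced subword witnesses for $u^Y \geq w^Y$ and $u^Z \geq w^Z$ inside a single reduced expression for $u$ to produce a subword reducing to $w^X$, using $\Delta_{P_X} = \Delta_{P_Y} \cap \Delta_{P_Z}$ to align the two. This is precisely the kind of statement supplied by the lifting/subword properties of parabolic quotients in Björner--Brenti (around Proposition~2.5.1), as flagged in the opening of this section; I would invoke that result to close the argument.
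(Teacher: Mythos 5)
Your proposal is correct and is essentially the paper's own argument: the paper likewise reduces to $F \subseteq g.(X^{w^Y} \cap X^{w^Z})$ (your cell decomposition of $p^{-1}(Y^w)$ is the paper's identity $p^{-1}(g.Y^w) = g.X^{w^Y}$, quoted from \cite{buch_seidel_2023}*{Lemma 2.1(e)}), and then to the combinatorial statement that, since $\Delta_{P_Y}\cap\Delta_{P_Z}=\Delta_{P_X}$, one has $u \geq w^X$ if and only if $u^Y \geq w^Y$ and $u^Z \geq w^Z$ for $u \in W^X$, i.e.\ that $w^Y$ and $w^Z$ admit the join $w^X$ in $W^X$. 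The one step you left open is precisely \cite{bjorner_combinatorics_2005}*{Theorem 2.6.1} (Deodhar's criterion for parabolic quotients), which is what the paper cites, so you can quote that theorem directly instead of attempting a subword-recombination argument from the lifting property near Proposition~2.5.1.
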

\begin{proof}
    Using \cite{buch_seidel_2023}*{Lemma 2.1(e)} we have
    \[
        F \subseteq p^{-1}(p(F)) \subseteq p^{-1}(g. Y^w) = g.X^{w^Y}.\]
    By the same argument for \(Z\), we have \(F \subseteq g.X^{w^Z}\), so that \(F \subseteq g.(X^{w^Y} \cap X^{w^Z})\). By \cite{bjorner_combinatorics_2005}*{Theorem 2.6.1}, using \(\Delta_{P_Y} \cap \Delta_{P_Z} = \Delta_{P_X}\), the elements \(w^Y, w^Z\) have a well-defined join equal to \(w^X\) in the poset \(W^X\). In particular \(X^{w^Y} \cap X^{w^Z} = X^{w^X} = X^w\), and we are done.
\end{proof}

\begin{proof}[Proof of \Theorem{implies}]
    By \cite{buch_finiteness_2013}*{Corollary 3.3}, the two-pointed Gromov--Witten variety \( M_d(X_{w_0 w}, X^u) \subset \Mb_{0,3}(X, d) \) is irreducible, and so is \( \Gamma_d(X_{w_0w},X^u) = \ev_3(M_d(X_{w_0 w}, X^u)) \subset X\).
    In the graded ring \(\QH(X)\), by \cites{belkale_transformation_2004, chaput_affine_2009} we have the identity \([X_{w_0 w}] \star [X^u] = q^d [X^{wu}]\).
    On the other hand, from Equation \eqn{GW} we have
    \[
        [X_{w_0 w}] \star [X^u] = q^d (ev_3)_*[M_d(X_{w_0 w}, X^u)] = q^d \deg(ev_3) [\Gamma_d(X_{w_0 w}, X^u)] ,
    \]
    so that \(\deg(ev_3)=1\), \([X^{wu}] = [\Gamma_d(X_{w_0 w}, X^u)]\), and  \(\codim X^{wu} = \codim \Gamma_d(X_{w_0 w}, X^u)\). Since Schubert varieties are irreducible, we are left with proving the inclusion \(\Gamma_d(X_{w_0w},X^u) \subseteq w^{-1}. X^{w u}\).

    We prove it by induction on \(| \Delta \setminus \Delta_X |\).
    If \(|\Delta \setminus \Delta_X| = 1\), we are done by assumption of \Conjecture{spec}.

    Otherwise, consider parabolic subgroups \(P_Y, P_Z \supsetneq P_X\) such that \(P_Y \cap P_Z = P_X\). Let \(p: X \to Y=G/P_Y\), \(q: X \to Z = G/P_Z\) be the projections. Then \(p(\Gamma_d(X_{w_0w},X^u)) \subseteq \Gamma_{p_*(d)}(Y_{w_0w},Y^u)\). Furthermore, \(p_*(d_{\min}(w,u)) = d_{\min}(w,u) \in H_2(Y, \Z)\) (see \cite{buch_euler_2020}*{Section 2.3} or \cites{chaput_affine_2009,belkale_transformation_2004}). Applying induction to \(Y\), \(\Gamma_{p_*(d)}(Y_{w_0w},Y^u) \subseteq w^{-1}.Y^{wu}\), so that
    \[
        p(\Gamma_d(X_{w_0w}, X^u)) \subseteq \Gamma_{p_*(d)}(Y_{w_0w}, Y^u) \subseteq w^{-1}.Y^{wu}.
    \]
    Analogously, we have \(q(\Gamma_d(X_{w_0w}, X^u)) \subseteq w^{-1}.Z^{wu}\). Using \Lemma{induct} for \(F = \Gamma_d(X_{w_0w}, X^u)\), we are done.
\end{proof}

\section{Proof of the conjecture in type A}

In this section we assume that the group \(G\) is of type A. Identify \(\Delta\) with the integers \(\{1, \dots, n-1\}\) and \(W\) with the symmetric group \(S_n\).
When \(P_X\) is the maximal parabolic corresponding to the simple root \(k\), i.e. \(\Delta_{P_X} = \Delta \setminus \{k\}\), then \(X\) is isomorphic to the Grassmannian variety \(\Gr(k,n)\) of \(k\)-planes in \(\C^n\).

We have the isomorphism \(W^{\comin} \cong \Z/n\Z \), with a generator \(w \in S_n\) given by
\begin{equation*}
    w(t) = \begin{cases}
        n,   & \text{ if } t=1            \\
        t-1, & \text{ if } 2\leq t \leq n
    \end{cases}.
\end{equation*}
The element corresponding to the simple root \(i \in \{1, \dots, n-1\}\) is given by \(w^i \in S_n\).

When \(X = \Gr(k,n)\) is a Grassmannian, we can describe the two-pointed curve neighborhoods using the \textit{quantum equals classical} theorems (see \cite{buch_gromov-witten_2003}). For an effective degree \(0 \leq d \leq \min(k,n-k)\) we denote \(Z_d = \Fl(k-d, d, k+d; n)\) to be the variety of three-step flags of dimensions \((k-d, d, k+d)\) in \(\C^n\). Similarly, we denote \(Y_d = \Fl(k-d, k+d; n)\) to be the variety of two-step flags of dimensions \((k-d, k+d)\) in \(\C^n\). We then have the projection maps \(p_d : Z_d \to X\), \(q_d: Z_d \to Y_d\) and \(\Gamma_d(X_u,X^v) = p_d(Z_d(X_u,X^v))\) where \(Z_d(X_u, X^v) = q_d^{-1}( q_d(p_d^{-1}(X_u)) \cap q_d(p_d^{-1}(X^v))) \subset Z_d\). Analogous to Equation \eqn{GW},
\[
    [X_u] \star [X^v] = \sum_{d \geq 0} (q_d)_*[Z_d(X_u, X^v)] q^d.
\]

Recall that Grassmannian Schubert varieties can be equivalently indexed by a partition. Fix a basis \(e_1, \dots, e_n\) of \(\C^n\) such that \(B\) acts on \(\C^n\) by upper-triangular matrices. Consider the complete flag \(E_{\bullet} = E_1 \subset E_2 \subset \dots \subset E_n\) where \(E_i = \Span\{e_1, \dots, e_i \}\), then \(B.E_\bullet = E_\bullet\). Similarly, consider the opposite flag \(E^{\opp}_{\bullet} = E^{\opp}_1 \subset E^{\opp}_2 \subset \dots \subset E^{\opp}_n\) where \(E^{\opp}_i = \Span\{e_n, \dots, e_{n-i+1} \}\), then \(B^{-}.E_\bullet^{\opp} = E_\bullet^{\opp}\).

Given a partition \( \lambda = (\lambda_1, \dots, \lambda_k) \) such that \(n-k \geq \lambda_1 \geq \dots \geq \lambda_k \geq 0\), define the \(B\)-stable Schubert variety
\[
    X_\lambda = \{ \Sigma\in X: \dim\Sigma \cap E_{i+\lambda_{k-i+1}} \geq i\ \text{for } i=1, \dots, k \},
\]
of dimension \(\dim X_\lambda = |\lambda|\), and the \(B^{-}\)-stable Schubert variety
\[
    X^\lambda = \{ \Sigma\in X: \dim\Sigma \cap E^{\opp}_{n-k+i-\lambda_i}  \geq i\ \text{for } i=1, \dots, k \}
\]
of codimension \(\codim X^\lambda = |\lambda|\).

These varieties satisfy \(X_\lambda = X_w\) and \(X^\lambda = X^w\) where \(w \in W^X \subset S_n\) is the permutation of minimal length such that \( \lambda = (w(k)-k, w(k-1)-(k-1), \dots, w(1)-1)\). We will also think of \(\lambda\) as a Young diagram inside a \(k \times (n-k)\) rectangle such that row \(i\) has \(\lambda_i\) boxes.

\begin{thm}
    \label{thm:Grr}
    Let \(X = \Gr(k, n)\). For \(u \in W\), \(w \in W^{\comin}\), and \( d = d_{\min}(w,u) \in H_2(X, \Z)\), we have
    \[
        \Gamma_d(X_{w_0 w}, X^u) = w^{-1}.X^{w u}
    \]
\end{thm}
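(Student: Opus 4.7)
The plan is to leverage the quantum-equals-classical theorem for Grassmannians, modeled on \cite{li_seidel_2022}*{Proposition 4.5}. As in the proof of \Theorem{implies}, the curve neighborhood $\Gamma_d(X_{w_0 w}, X^u)$ is irreducible and its cohomology class equals $[X^{wu}]$, so both sides of the desired identity are irreducible of the same codimension. It therefore suffices to prove the set-theoretic containment $\Gamma_d(X_{w_0 w}, X^u) \subseteq w^{-1}.X^{wu}$.

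Since $W^{\comin} \cong \Z/n\Z$, I would write $w = w^i$ for some $i \in \{0, 1, \dots, n-1\}$; the case $i=0$ is immediate. My first step would be to record explicit formulas for $d = d_{\min}(w^i, u)$ and for the representative of $w^i u$ in $W^X$ using the combinatorial descriptions of \cites{belkale_transformation_2004,chaput_affine_2009}, and translate these into the partition $\mu$ indexing $X^{wu}$ inside the $k \times (n-k)$ rectangle. I would also describe the cyclically shifted flag $w^{-1}.E^{\opp}_{\bullet}$ explicitly in terms of the basis $e_1, \dots, e_n$, so that $w^{-1}.X^{wu}$ becomes a concrete set of $k$-planes cut out by intersection inequalities against this shifted flag.

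The geometric input comes from $\Gamma_d(X_{w_0 w}, X^u) = p_d(Z_d(X_{w_0 w}, X^u))$ with $Z_d = \Fl(k-d, d, k+d; n)$. Any $\Sigma$ in the curve neighborhood fits into a flag $A \subset \Sigma \subset C$ with $\dim A = k-d$ and $\dim C = k+d$, together with auxiliary $k$-planes $\Sigma' \in X_{w_0 w}$ and $\Sigma'' \in X^u$ satisfying $A \subset \Sigma', \Sigma'' \subset C$. The Schubert conditions on $\Sigma'$ (relative to $E_{\bullet}$) and on $\Sigma''$ (relative to $E^{\opp}_{\bullet}$), together with these containments, yield precise lower bounds on $\dim(A \cap E^{\opp}_j)$ and $\dim(C \cap E^{\opp}_j)$ for the relevant indices $j$; these are the Schubert conditions describing $q_d(p_d^{-1}(X_{w_0 w}))$ and $q_d(p_d^{-1}(X^u))$ inside $Y_d$. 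Combining them through $A \subset \Sigma \subset C$ should produce exactly the Schubert conditions on $\Sigma$ that cut out $w^{-1}.X^{wu}$.

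The main obstacle will be the combinatorial matching in the final step: for each row $j$ of $\mu$ one must verify that the intersection dimension bounds inherited from $A$ and $C$ conspire to give $\dim(\Sigma \cap w^{-i}.E^{\opp}_{n-k+j-\mu_j}) \geq j$. This requires carefully tracking how the cyclic shift by $w^{-i}$ interacts with $E_{\bullet}$ and $E^{\opp}_{\bullet}$, together with attention to degenerate behavior when $d$ approaches $\min(k, n-k)$ and to cases where $w^i u$ is not already in $W^X$. Guided by the $i=1$ case treated in \cite{li_seidel_2022}, I expect the key observation to be that the nested pair $(A,C)$ already forces $\Sigma$ into the requisite dimension inequalities with respect to the shifted opposite flag, after which irreducibility and codimension matching close the argument.
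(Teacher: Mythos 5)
Your strategy is the same as the paper's: reduce to the containment \(\Gamma_d(X_{w_0 w}, X^u) \subseteq w^{-1}.X^{wu}\) via irreducibility and the class identity, then extract Schubert conditions on \(\Sigma\) from the nested data \(A \subset \Sigma \subset C\) supplied by quantum-equals-classical. However, the proposal stops exactly at the step where the content lies, and that step is not routine. The naive combination of conditions through \(A \subset \Sigma \subset C\) is too weak: from \(\Sigma'' \subset C\) one only gets \(\dim \Sigma \cap E^{\opp}_{n-k+i-\lambda_i} \geq \dim C \cap E^{\opp}_{n-k+i-\lambda_i} - d \geq i-d\), which loses \(d\) in every row and cuts out a variety of the wrong codimension. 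The missing idea is to mix the two families of conditions: from \(\Sigma' \in X_{(\beta-k)^k}\) one gets \(A \subset E_\beta\) and \(\dim C \cap E_\beta \geq k\), and one must combine \(E_\beta\) with \(E^{\opp}_{n-k+i-\lambda_i}\) row by row using the explicit formula \eqn{degeqn} for \(d = d_{\min}(w,u)\), namely \(d = \max(\{0\} \cup \{j : \lambda_j - (\beta-k) \geq j\})\). That formula is precisely what guarantees \(E^{\opp}_{n-k+i-\lambda_i} \cap E_\beta = 0\) for \(i \leq d\) (so \(\dim \Sigma \cap (E^{\opp}_{n-k+i-\lambda_i} + E_\beta) \geq i+k-d\) becomes a nontrivial condition) and \(E^{\opp}_{n-k+i-\lambda_i} + E_\beta = \C^n\) for \(i > d\) (so \(\dim \Sigma \cap (E^{\opp}_{n-k+i-\lambda_i} \cap E_\beta) \geq i-d\) does). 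These subspaces assemble into a single flag that is part of \(w^{-1}.E^{\opp}_\bullet\); without pinning down this role of the minimality of \(d\), ``combining the conditions'' does not yield Schubert conditions for the shifted flag at all.

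Moreover, your intended endgame, verifying \(\dim(\Sigma \cap w^{-1}.E^{\opp}_{n-k+j-\mu_j}) \geq j\) for the partition \(\mu\) of \((wu)^X\), requires computing \(\mu\) and matching it row by row, which is exactly the obstacle you flag. The paper sidesteps this: it only observes that the derived conditions define \(w^{-1}.X^{v}\) for \emph{some} \(v \in W^X\), computes \(\ell(v) = n(k-d) - \beta k + \ell(u^X) = \ell((wu)^X)\), and then uses \([\Gamma_d(X_{w_0 w}, X^u)] = [X^{wu}]\) together with the containment and equal codimension to force \(v = (wu)^X\). Adopting that shortcut, together with the duality \(\Gr(k,n) \cong \Gr(n-k,n)\) to assume \(\beta \geq k\) (which simplifies the case analysis), would turn your outline into a complete argument; as written, the decisive computation is still missing.
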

\begin{proof}
    If \(w = 1\), then \(d=0\) and \( \Gamma_d(X_{w_0 w}, X^u) = \Gamma_0(X_{w_0}, X^u) = X_{w_0} \cap X^u = X^u = w^{-1}. X^{wu}\). Assume \(w \neq 1\).

    Let \(\beta \in \Delta\) be a simple root that indexes \(w\). Using the duality isomorphism \(\Gr(k,n) \cong \Gr(n-k, n)\), we may assume \( \beta \geq k\).
    As an element of \(S_n\), \( w_0 w = (\beta\ (\beta-1)\ \dots\ 1\ n\ (n-1)\ \dots (\beta+1)) \) in one-line notation. The corresponding partition in \(X\) is given by \((\beta-k, \beta-k, \dots, \beta-k) = (\beta-k)^k \). Denote by \(\lambda\) the partition associated to \(u^X\).

    We use \cites{fulton_quantum_2004} to get an explicit expression for \(d\). The degree \(d\) is the minimal degree of \(q\) in \([X_{(\beta-k)^k}] \star [X^\lambda] = [X^{(n-\beta)^k}] \star [X^\lambda]\). Consider the overlap of \(\lambda\) and the \(180^\circ\) rotation of \((n-\beta)^k\) in the \(k \times (n-k)\) rectangle. This overlap is a partition, call it \(\lambda^\prime\), where we remove the left-most \((\beta-k)\) columns from \(\lambda\) (see \Figure{degreefig}). Then \(d\) is the length of the longest NW-SE diagonal sequence of boxes in this overlap.
    Since \(\lambda^\prime\) is a partition, we can always move a NW-SE diagonal sequence of boxes to the NW corner of \(\lambda^\prime\). This gives the following formula for \(d\):
    \begin{equation}
        \label{eqn:degeqn}
        d = \max (\{0\} \cup \{j : \lambda_j - (\beta - k) \geq j\}).
    \end{equation}

    Here is an example where \(n=9, k=4, \lambda = (5,4,3,1), \beta=5\):
    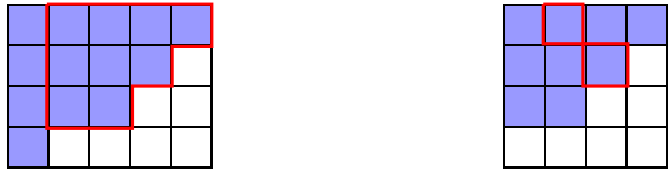
\begin{figure}[H]
        \begin{subfigure}{0.49\textwidth}
            \centering
            \ytableausetup{centertableaux}
            \begin{tikzpicture}[inner sep=0in,outer sep=0in]
                \node (n) {\begin{varwidth}{5cm}{
                            \begin{ytableau}
                                *(blue!40) \  & *(blue!40) \  & *(blue!40) \  & *(blue!40) \  &*(blue!40) \ \\
                                *(blue!40) \  & *(blue!40) \  & *(blue!40) \  & *(blue!40) \  &\ \\
                                *(blue!40) \  & *(blue!40) \  & *(blue!40) \ &\ &\ \\
                                *(blue!40) \ & \ & \ &\ &\
                            \end{ytableau}
                        }\end{varwidth}};
                \draw[very thick,red] ([xshift=1.5em]n.north west)--(n.north east)--([yshift=1.5em]n.east)--([yshift=1.5em, xshift=-1.5em]n.east)--([xshift=-1.5em]n.east)--([xshift=-2*1.5em]n.east)--([xshift=-2*1.5em, yshift=1.5em]n.south east)--([xshift=1.5em, yshift=1.5em]n.south west)--([xshift=1.5em]n.north west);
            \end{tikzpicture}
            \caption{\(\lambda\) given by the shaded region. Contour denotes the overlap.}
        \end{subfigure}
        \begin{subfigure}{0.49\textwidth}
            \centering
            \ytableausetup{centertableaux}
            \begin{tikzpicture}[inner sep=0in,outer sep=0in]
                \node (n) {\begin{varwidth}{5cm}{
                            \begin{ytableau}
                                *(blue!40) \  & *(blue!40) \  & *(blue!40) \  &*(blue!40) \ \\
                                *(blue!40) \  & *(blue!40) \  & *(blue!40) \  &\ \\
                                *(blue!40) \  & *(blue!40) \ &\ &\ \\
                                \ & \ &\ &\
                            \end{ytableau}
                        }\end{varwidth}};
                \draw[very thick,red] ([xshift=1.5em]n.north west)--([xshift=3em]n.north west)--([xshift=3em]n.west)--([xshift=-1.5em]n.east)--([xshift=-1.5em, yshift=-1.5em]n.north east)--([xshift=1.5em, yshift=-1.5em]n.north west)--([xshift=1.5em]n.north west);
            \end{tikzpicture}
            \caption{\(\lambda^\prime\) given by the shaded region. Contour denotes a longest NW-SE diagonal.}
        \end{subfigure}
        \caption{Finding \(d\) from \(\lambda\) and \(\lambda^\prime\).}
        \label{fig:degreefig}
    \end{figure}

    By the quantum equals classical theorem, we have that \(\Gamma_d(X_{w_0 w}, X^u) = p_d(Z_d(X_{w_0 w}, X^u))\). Looking at the three-step flags in \(q_d^{-1} q_d p_d^{-1} (X^u) \subset Z_d\), we have
    \begin{align*}
        q_d^{-1} q_d p_d^{-1} (X^u)                                                               & = q_d^{-1} q_d p_d^{-1} (X^\lambda)                                                                                                                                                                                            \\
                                                                                                  & = \{
        V_{k-d} \leq V_k \leq V_{k+d}                                                           : &                                     & \exists \wt{V_k} \text{ such that }   V_{k-d} \leq \wt{V_k} \leq V_{k+d} \text{ and }                                                                                                    \\
                                                                                                  &                                     &                                                                                               & \dim \wt{V_{k}} \cap E^{\opp}_{n-k+i-\lambda_i} \geq i \text{ for } 1 \leq i \leq k\}    \\
                                                                                                  & \subseteq \{
        V_{k-d} \leq V_k \leq V_{k+d}                                                           : &                                     & \dim V_{k+d} \cap E^{\opp}_{n-k+i-\lambda_i} \geq i \text{ for } 1 \leq i \leq k \text{ and }                                                                                            \\
                                                                                                  & \                                   &                                                                                               & \dim V_{k-d} \cap E^{\opp}_{n-k+i-\lambda_i} \geq i-d \text{ for } d+1 \leq i \leq k \},
    \end{align*}
    where the last condition follows from \(\dim V_{k-d} \cap E^{\opp}_{n-k+i-\lambda_i} \geq \dim \wt{V_k} \cap E^{\opp}_{n-k+i-\lambda_i} - d \geq i-d\).
    By symmetry, for the special case of \(X_{w_0 w}\), we have
    \begin{align*}
        q_d^{-1} q_d p_d^{-1} (X_{w_0 w})                                                         & = q_d^{-1} q_d p_d^{-1} (X_{(\beta-k)^k})                                                                                                                                                                 \\
                                                                                                  & \subseteq \{
        V_{k-d} \leq V_k \leq V_{k+d}                                                           : &                                           & \dim V_{k+d} \cap E_{i+\beta-k} \geq i \text{ for } 1 \leq i \leq k \text{ and }                                                                              \\
                                                                                                  & \                                         &                                                                                  & \dim V_{k-d} \cap E_{i+\beta-k} \geq i-d \text{ for } d+1 \leq i \leq k \} \\
                                                                                                  & = \{
        V_{k-d} \leq V_k \leq V_{k+d}                                                           : &                                           & V_{k-d} \subset E_{\beta},\ \dim V_{k+d} \cap E_{\beta} \geq k \}.
    \end{align*}
    Let \( V_k \in \Gamma_d(X_{w_0 w}, X^u) = p_d(Z_d(X_{w_0 w}, X^u)) = p_d( q_d^{-1} q_d p_d^{-1} (X_{w_0 w}) \cap q_d^{-1} q_d p_d^{-1}(X^u) ) \).
    If \(1 \leq i \leq d\), then \(k-i+\lambda_i + 1 > \beta\) by \eqn{degeqn}, so that \(E^{\opp}_{n-k+i-\lambda_i} \cap E_\beta = 0\) and
    \begin{align*}
        \dim V_k \cap (E^{\opp}_{n-k+i-\lambda_i} + E_\beta) & \geq \dim V_{k+d} \cap (E^{\opp}_{n-k+i-\lambda_i} + E_\beta) - d \\
                                                             & \geq
        \dim V_{k+d} \cap E^{\opp}_{n-k+i-\lambda_i} + \dim V_{k+d} \cap E_\beta -d \geq i + k-d.
    \end{align*}
    Otherwise, if \(d+1 \leq i \leq k\), then \(k-i+\lambda_i+1\leq \beta\) by \eqn{degeqn}, so that \(E^{\opp}_{n-k+i-\lambda_i} + E_\beta = \C^n\) and
    \begin{align*}
        \dim V_k \cap (E^{\opp}_{n-k+i-\lambda_i} \cap E_\beta) & \geq\dim V_{k-d} \cap (E^{\opp}_{n-k+i-\lambda_i} \cap E_\beta) \\
                                                                & = \dim V_{k-d} \cap E^{\opp}_{n-k+i-\lambda_i} \geq i-d.
    \end{align*}
    For simplicity, we denote
    \begin{equation}
        \label{eqn:defG}
        G_i = \begin{cases}
            E^{\opp}_{n-k+(i+d)-\lambda_{i+d}} \cap E_\beta,  & \text{if } 1\leq i\leq k-d,     \\
            E^{\opp}_{n-k+(i-k+d)-\lambda_{i-k+d}} + E_\beta, & \text{if } k-d+1 \leq i \leq k.
        \end{cases}
    \end{equation}
    So that we have a chain of inclusions
    \[
        G_1 \lneq G_2 \lneq \dots \lneq G_{k-d} \lneq G_{k-d+1} \lneq \dots \lneq G_k,
    \]
    and dimension conditions \( \dim V_k \cap G_{i} \geq i\) for all \(1 \leq i \leq k\).

    Note that the subspaces \(G_i\) are parts of the complete flag \(F^{\opp}_{\bullet}\) defined by
    \begin{equation*}
        F^{\opp}_i = \begin{cases}
            \Span \{e_\beta, e_{\beta-1}, \dots, e_{\beta-i+1}\},                        & \text{if } 1\leq i \leq \beta   \\
            \Span \{e_\beta, e_{\beta-1}, \dots, e_1, e_{n}, \dots, e_{n-(i-\beta)+1}\}, & \text{if } \beta+1\leq i \leq n
        \end{cases}
    \end{equation*}
    which is precisely given by \(F_{\bullet}^{\opp} = w^{-1}.E_{\bullet}^{\opp}\).

    The dimension conditions \(\dim V_k \cap G_i \geq i\) for \(1 \leq i \leq k\) define a Schubert variety with respect to the flag \(F_\bullet^{\opp}\), it is given by
    \begin{align*}
        \{V_k : \dim V_k \cap G_i \geq i \text{ for } 1\leq i\leq k \} & = w^{-1}.\{w.V_k : \dim V_k \cap G_i \geq i \text{ for } 1\leq i\leq k\}  \\
                                                                       & =  w^{-1}.\{V_k : \dim V_k \cap w.G_i \geq i \text{ for } 1\leq i\leq k\} \\
                                                                       & = w^{-1}.X^{v}
    \end{align*}
    for some \(v\in W^X\).

    We compute using \eqn{defG}
    \begin{align*}
        \ell(v) & = \codim w^{-1}. X^v = \sum (n-k+i-\dim G_i)                                                                          \\
                & = \sum_{1 \leq i \leq k-d} (n-k+i - (\beta - k+i+d-\lambda_{i+d}))                                                    \\
                & + \sum_{k-d<i \leq k} (n-k+i - ( \beta + n-k+(i-k+d)-\lambda_{i-k+d}))                                                \\
                & = \sum_{1 \leq i \leq k-d} (n - \beta - d + \lambda_{i+d} ) + \sum_{k-d < i \leq k} (k - \beta - d + \lambda_{i-k+d}) \\
                & = n(k-d) -\beta k + |\lambda| = n(k-d) -\beta k + \ell(u^X)
    \end{align*}
    On the other hand, from \eqn{QHeq} and \([X^w] = [X_{w_0 w}]\) we compute
    \begin{align*}
        \ell((wu)^X) & = \codim X^{wu} = \codim X_{w_0 w} + \codim X^u - d \deg(q) \\
                     & = \dim X - \dim X_{w_0 w} + \ell(u^X) - d n                 \\
                     & = k(n-k) - (\beta-k)k + \ell(u^X) - dn                      \\
                     & = n(k-d) -\beta k +\ell(u^X) = \ell(v)
    \end{align*}

    Since \(\Gamma_d(X_{w_0 w}, X^u) \subseteq w^{-1}.X^v\) and \(\dim X^v = \dim X^{wu} = \dim \Gamma_d(X_{w_0 w}, X^u)\) we get that the inclusion is an equality. Since \([\Gamma_d(X_{w_0 w}, X^u)] = [X^{wu}]\), we get \(v = (wu)^X\) from which the conclusion follows.
\end{proof}

By the above and \Theorem{implies}, we conclude that \Conjecture{gen} holds in type A.

\bibliography{bib.bib}
\end{document}